\documentclass[reqno,11pt]{amsart}
\usepackage[left=1in,right=1in,top=1in,bottom=1in]{geometry}
\usepackage{enumitem}
\usepackage{amsmath}
\usepackage{amssymb}
\usepackage{mathtools}
\usepackage{graphicx}
\usepackage{tikz}
\usepackage{esint}
\usetikzlibrary{arrows, decorations.markings,matrix,trees}
\usepackage{hyperref}
\hypersetup{
   colorlinks=true,
   citecolor=blue,
   filecolor=blue,
   linkcolor=blue,
   urlcolor=blue
}

\newcommand{\va}{\varepsilon}
\newcommand{\ud}{\mathrm{d}}
\newcommand{\be}{\begin{equation}}
\newcommand{\ee}{\end{equation}}

\newcommand{\B}{\mathbb{B}}

\newcommand{\cL}{\mathcal{L}}

\newcommand{\R}{\mathbb{R}}

\newcommand{\Ss}{\mathbb{S}}

\newcommand{\wt}{\widetilde}

\newcommand{\f}{\frac}

\newcommand{\ol}{\overline}

\DeclareMathOperator{\dist}{dist}

\DeclareMathOperator{\BMO}{BMO}
\DeclareMathOperator{\VMO}{VMO}
\DeclareMathOperator{\sgn}{sgn}
\DeclareMathOperator{\supp}{supp}

\DeclareMathOperator{\Lip}{Lip}

\DeclareMathOperator{\loc}{loc}

\DeclareMathOperator{\id}{id}

\providecommand{\fint}{\mathop{\ooalign{$\int$\cr\hidewidth$-$\hidewidth\cr}}\nolimits}

\def\<{\langle}\def\>{\rangle}
\def\({\left(}\def\){\right)}
\numberwithin{equation}{section}
\theoremstyle{plain}
\newtheorem{thm}{Theorem}[section]

\newtheorem{lem}[thm]{Lemma}
\newtheorem{prop}[thm]{Proposition}

\theoremstyle{definition}
\newtheorem{defn}[thm]{Definition}

\theoremstyle{remark}
\newtheorem{rem}[thm]{Remark}

\title[$\VMO$ approximate fixed points] {A note on an approximate fixed point theorem\\ for $\VMO$ maps}

\date{}

\author{Haotong Fu}
\address{School of Mathematical Sciences, Peking University, Beijing 100871, China}
\email{2301110012@pku.edu.cn}

\author{Wei Wang}
\address{School of Mathematical Sciences, Peking University, Beijing 100871, China}
\email{wwmath166@outlook.com,\,\,2201110024@stu.pku.edu.cn}

\begin{document}

\begin{abstract}
We prove an approximate fixed point theorem for maps of vanishing
mean oscillation from the unit ball to itself. This gives a short proof of the Open Problem 5.10 of Brezis \cite{Bre23}.
\end{abstract}

\subjclass[2020]{Primary 46E30; Secondary 42B35, 47H10.}

\keywords{$\VMO$, $\BMO$, approximate fixed point, Brouwer's theorem, no-retraction theorem}
\maketitle

\section{Introduction}

\subsection{Main results}

Let $n\geq 1$ and let
\[
\B=B_1^n=\{x\in\R^n: |x|<1\}.
\]
For $u\in L_{\loc}^1(\B;\R^m)$ and for a ball
$B_r(a)\subset\subset\B$, set
\[
u_{B_r(a)}:=\fint_{B_r(a)}u\ud x.
\]
The space $\BMO$ was introduced by John and Nirenberg \cite{JN61} and
$\VMO$ by Sarason \cite{Sar75}. In the present paper, the $\BMO$ seminorm
on $\B$ is
\[
\|u\|_{\BMO(\B)}:=\sup_{B_r(a)\subset\subset\B}
\fint_{B_r(a)}|u-u_{B_r(a)}|\ud x,
\]
and the corresponding small-scale oscillation is
\[
\omega_u(\rho):=
\sup_{\substack{B_r(a)\subset\subset\B\\0<r<\rho}}
\fint_{B_r(a)}|u-u_{B_r(a)}|\ud x.
\]

\begin{defn}
For $u\in L_{\loc}^1(\B;\R^m)$, we define:
\begin{enumerate}
\item $u\in\BMO(\B;\R^m)$ if $\|u\|_{\BMO(\B)}<+\infty$;
\item $u\in\VMO(\B;\R^m)$ if $u\in\BMO(\B;\R^m)$ and
$\omega_u(\rho)\to0^+$ as $\rho\downarrow0$. 
\end{enumerate}
\end{defn}

We now present the following main theorem.

\begin{thm}\label{thm:main}
Let $f\in\VMO(\B;\R^n)\cap L^\infty(\B;\R^n)$ satisfy
\[
|f(x)|\leq1\quad\text{for a.e. }x\in\B.
\]
Then, for any $\va>0$,
\[
\cL^n(\{x\in\B: |f(x)-x|<\va\})>0,
\]
where $ \cL^n $ denotes the Lebesgue measure on $ \R^n $.

Equivalently, there do not exist $\va_0>0$ and such a map $f$ satisfying
\[
        |f(x)-x|\geq\va_0
        \quad\text{for a.e. }x\in\B.
\]
\end{thm}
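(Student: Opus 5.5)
We argue by contradiction: suppose $f\in\VMO(\B;\R^n)$, $|f|\le1$ a.e., and $|f(x)-x|\ge\va_0$ for a.e.\ $x\in\B$. The plan is to mollify $f$ at a small scale, obtaining a continuous map that is still uniformly far from the identity on most of the ball. We then build a continuous self-map of a closed ball without fixed points, contradicting Brouwer's theorem.

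\textbf{Step 1: averaging.} For $x\in B_{1-2\delta}$ and $0<r<\delta$, set $f_r(x):=f_{B_r(x)}$. This is continuous in $x$, and since $|f|\le1$ a.e.\ we get $|f_r(x)|\le1$. We need $f_r(x)$ to stay away from $x$. Because $|f(y)-y|\ge\va_0$ a.e.\ and $|y-x|<r$ on $B_r(x)$, we have $|f(y)-x|\ge\va_0-r$ for a.e.\ $y\in B_r(x)$. Then
\[
|f_r(x)-x|\ge \fint_{B_r(x)}|f(y)-x|\,\ud y-\fint_{B_r(x)}|f(y)-f_r(x)|\,\ud y\ge \va_0-r-\omega_f(2r).
\]
The first inequality is the triangle inequality: $|f(y)-x|\le|f(y)-f_r(x)|+|f_r(x)-x|$, averaged over $B_r(x)$. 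Using $\VMO$, choose $r$ so small that $r+\omega_f(2r)<\va_0/2$. Then $|f_r(x)-x|\ge\va_0/2$ for every $x\in \overline{B_{1-2\delta}}$, and we may take $\delta=r$ (any $\delta\ge r$ with $\delta$ small works). Ball containment $B_r(x)\subset\subset\B$ holds for $|x|\le 1-2\delta$.

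\textbf{Step 2: fixing the boundary.} The obstacle is that $f_r$ maps $\overline{B_{1-2\delta}}$ into $\overline{B_1}$ rather than into itself. Let $\rho=1-2\delta$ and $\pi$ be the radial retraction of $\R^n$ onto $\overline{B_\rho}$, and define $g=\pi\circ f_r:\overline{B_\rho}\to\overline{B_\rho}$. This map is continuous, so Brouwer gives a fixed point $x=\pi(f_r(x))$.
\begin{itemize}
\item If $|f_r(x)|\le\rho$, then $f_r(x)=x$, which contradicts Step 1.
\item Otherwise $x=\rho f_r(x)/|f_r(x)|$, so $f_r(x)$ lies on the same ray as $x$ with $\rho<|f_r(x)|\le1$. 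Then $|f_r(x)-x|\le 1-\rho=2\delta$. Choosing $\delta$ with $2\delta<\va_0/2$ contradicts Step 1.
\end{itemize}
Hence no such $f$ exists.

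\textbf{Choice of constants.} Choose $r$ with $r+\omega_f(2r)<\va_0/2$ and $r<\va_0/4$, then set $\delta=r$. The main point to check carefully is the oscillation estimate in Step 1, which is exactly where $\VMO$ (rather than $\BMO$) is used. The boundary step only needs $|f|\le1$.
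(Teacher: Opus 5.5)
Your proof is correct, and it takes a genuinely different and much shorter route than the paper.

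The paper first converts the separated map into a sphere-valued map $r(x)=P(x,f(x))$ via the ray-exit construction. This needs the explicit formula for $P$, a Lipschitz bound on $K_\eta$, McShane extension, stability of $\VMO$ under Lipschitz composition, and the boundary estimate $|r(x)-x/|x||\le A_\eta(1-|x|)$. It then mollifies and normalizes $r$, using that small-scale averages of $\Ss^{n-1}$-valued $\VMO$ maps stay near the sphere. Next it glues the result to $x/|x|$ across an annulus to obtain a continuous retraction $\overline{\B}\to\Ss^{n-1}$. Finally it invokes the no-retraction theorem.

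You use the same underlying mechanism: a small-ball average inherits an a.e.\ pointwise constraint up to an error bounded by the mean oscillation. But you apply it directly to the displacement, getting $|f_{B_r(x)}-x|\ge\va_0-r-\omega_f(2r)$. You then handle the boundary not through a retraction but by composing with the radial projection onto $\overline{B}_{1-2r}$ and applying Brouwer's fixed point theorem itself. This works because the boundary loss $2r$ is smaller than the separation. It removes the geometry of $P$, the normalization and the gluing altogether.

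Your argument is also explicitly quantitative. All it uses is a single $r$ with $3r+\omega_f(2r)<\va_0$. So the same proof shows that $\{|f(x)-x|<\va\}$ has positive measure for every $\va>\lim_{\rho\downarrow0}\omega_f(\rho)$ and every $f\in\BMO$ with $|f|\le1$. The paper's route passes the oscillation through $\Lip P$, which deteriorates as $\eta\downarrow0$, and through the mollifier. At best it would give implicit, $\eta$-dependent thresholds.

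What the paper's detour buys is structural rather than logical. Along the way it proves a $\VMO$ form of the no-retraction theorem: there is no $r\in\VMO(\B;\Ss^{n-1})$ with the above boundary behaviour. It also mirrors the classical derivation of Brouwer's theorem, which puts it closer to the $\VMO$ degree framework of Lanconelli--Uguzzoni.
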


The above theorem can be viewed as the $\VMO$ counterpart of
Brouwer's fixed point theorem. Recall the classical
statement as follows.

\begin{thm}\label{classical}
If $f\in C^0(\ol{\B},\ol{\B})$, then there exists
$x_0\in\ol{\B}$ such that $f(x_0)=x_0$.
\end{thm}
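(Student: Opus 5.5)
We would prove Theorem~\ref{classical} by the classical route through the no-retraction theorem: we reduce to a smooth map without fixed points, build from it a smooth retraction of $\ol{\B}$ onto $\partial\B$, and rule out such a retraction by an integral identity for the Jacobian determinant.

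\textbf{Step 1: reduction to smooth $f$.} Suppose, for contradiction, that $f\in C^0(\ol{\B},\ol{\B})$ has no fixed point. By compactness of $\ol{\B}$ and continuity of $|f(x)-x|$,
\[
\delta:=\min_{x\in\ol{\B}}|f(x)-x|>0.
\]
By the Weierstrass approximation theorem, applied componentwise, there is a polynomial map $p$ with $\sup_{\ol{\B}}|p-f|<\delta/4$. Set $\tilde f:=p/(1+\delta/4)$. Since $|p|\le 1+\delta/4$ on $\ol{\B}$, the map $\tilde f$ sends $\ol{\B}$ into $\ol{\B}$. Moreover $|\tilde f-p|\le\delta/4$, so $|\tilde f-f|<\delta/2$, and hence $|\tilde f(x)-x|>\delta/2$ for every $x\in\ol{\B}$. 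Thus we may assume that $f$ is $C^\infty$ on a neighbourhood of $\ol{\B}$.

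\textbf{Step 2: a smooth retraction.} For $x\in\ol{\B}$, let $r(x)$ be the point where the ray starting at $f(x)$ and passing through $x$ leaves the ball. Explicitly, with $e(x)=\frac{x-f(x)}{|x-f(x)|}$, we set $r(x)=x+t(x)e(x)$, where $t(x)\ge0$ is the nonnegative root of $|x+te|^2=1$:
\[
t(x)=-x\cdot e+\sqrt{(x\cdot e)^2+1-|x|^2}.
\]
The discriminant $(x\cdot e)^2+1-|x|^2$ is strictly positive on $\ol{\B}$. For $|x|<1$ this is clear. For $|x|=1$ we have $x\cdot e>0$: the segment from $f(x)\in\ol{\B}$ to $x\in\partial\B$ lies in $\ol{\B}$ by convexity, and $x\cdot e=0$ would make it tangent to the sphere at $x$, which is impossible since $f(x)\ne x$. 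Hence $r$ is smooth near $\ol{\B}$, satisfies $|r|=1$, and satisfies $r(x)=x$ on $\partial\B$ (there $t=0$).

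\textbf{Step 3: no smooth retraction exists (the main obstacle).} The key fact we need is the null-Lagrangian property of the Jacobian: if $u,v\in C^2(\ol{\B};\R^n)$ and $u=v$ on $\partial\B$, then
\[
\int_\B\det Du\,\ud x=\int_\B\det Dv\,\ud x.
\]
We would prove this by setting $u_s=v+s(u-v)$ and differentiating in $s$. Writing $w=u-v$ and $\operatorname{cof}$ for the cofactor matrix,
\[
\frac{\ud}{\ud s}\det Du_s=\operatorname{cof}(Du_s):Dw=\sum_{i,j}\partial_j\bigl(\operatorname{cof}(Du_s)_{ij}\,w^i\bigr),
\]
where the second equality uses the Piola identity $\sum_j\partial_j\operatorname{cof}(Du)_{ij}=0$. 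This identity is the one genuinely computational point, and it is verified by expanding the cofactors and cancelling mixed second derivatives in pairs; this is why we want $C^2$ regularity. The divergence theorem together with $w=0$ on $\partial\B$ then makes the derivative in $s$ vanish, so the integral is independent of $s$.

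\textbf{Step 4: the contradiction.} Apply Step~3 with $u=r$ and $v=\id$. Since $|r|^2\equiv1$, differentiating gives $(Dr)^Tr=0$ with $r\ne0$, so $Dr$ is singular and $\det Dr\equiv0$. On the other hand, $\int_\B\det D(\id)\,\ud x=\cL^n(\B)>0$. This contradicts the identity of Step~3, so $f$ must have a fixed point, proving Theorem~\ref{classical}.
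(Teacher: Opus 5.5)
Your proof is correct, and it differs from the paper's only in how retractions are excluded. The paper does not prove Theorem~\ref{classical} itself. It applies the ray-exit construction directly to the continuous fixed-point-free $f$, obtaining a \emph{continuous} retraction $\ol\B\to\Ss^{n-1}$, and then cites the homological no-retraction theorem (Hatcher, Corollary~2.15).

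You use the same ray construction; your check that $x\cdot e>0$ on $\partial\B$ is the discriminant argument of Lemma~\ref{lem:ray-map}. You then exclude retractions analytically, through Jacobi's formula, the Piola identity and the divergence theorem. That argument only handles $C^2$ maps, so you must smooth first, and you do so at the right level. You approximate $f$ rather than the retraction. The rescaling by $1+\delta/4$ keeps $\tilde f(\ol\B)\subset\ol\B$, which is exactly what gives $x\cdot e>0$ on $\partial\B$, hence $r$ smooth up to the boundary with $r=\mathrm{id}$ there.

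What each route buys:
\begin{itemize}
\item The paper's route is shorter and needs no approximation, but it rests on singular homology.
\item Your route uses only calculus. Combined with the paper's $T=-\mathcal R$ argument in Lemma~\ref{lem:no-cont-retraction}, it also yields the continuous no-retraction statement the paper actually uses, so the whole paper becomes self-contained.
\end{itemize}
One caveat on that last point: the invariance of $\int_\B\det Du$ under changes that fix $u|_{\partial\B}$ is precisely the integral formula for the smooth degree. So your argument is classical degree theory in elementary form, not an escape from degree-type information.
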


Indeed, if $f(x)\neq x$ for all $x\in\ol{\B}$, then following the ray from
$f(x)$ through $x$ gives a continuous retraction of $\ol{\B}$ onto
$\Ss^{n-1}$. The impossibility of such a retraction proves Brouwer's
theorem; see, for instance, \cite[Corollary 2.15]{Hat02}.

\begin{rem}
We make the following comments on Theorem \ref{thm:main}.
\begin{enumerate}
\item Brezis asked in Open Problem 5.10 of \cite{Bre23} whether an
approximate fixed point statement remains true for $\VMO$ self-maps in closed unit balls. We deal with the problem on the open unit ball $ \B $, since the $ \VMO $ maps are equivalent up to a set of zero measures. 
\item Lanconelli and Uguzzoni proved a statement equivalent to Theorem \ref{thm:main}; see Proposition 12 of \cite{LU02}. Their proof is based on
the $\VMO$ degree theory developed in that paper. Here we give a direct
proof in the Euclidean ball, using only local averages, a ray-exit
construction, and the classical no-retraction form of Brouwer's theorem.

\item The proof below is self-contained apart from the classical Brouwer's
fixed point theorem, and it does not use $\VMO$ degree theory.
\end{enumerate}
\end{rem}

\section{The ray-exit construction}

We first record the elementary geometric construction that replaces the
pointwise framework. Throughout this section, fix
$0<\eta<2$ and set
\[
K_\eta=\{(x,y)\in\ol{\B}\times\ol{\B}:|x-y|\geq\eta\}.
\]
For $(x,y)\in K_\eta$, the ray
\[
y+t(x-y),\quad t\geq0,
\]
meets the sphere at a unique last point. We denote this point by $P(x,y)$.
The following lemma gives the explicit formula and the regularity property
of $ P $.

\begin{lem}\label{lem:ray-map}
For $(x,y)\in K_\eta$, put $v=x-y$. Then
\[
        P(x,y)=y+t(x,y)v,
\]
where
\[
        t(x,y):=
        \f{-y\cdot v+
        \sqrt{(y\cdot v)^2+|v|^2(1-|y|^2)}}{|v|^2}.
\]
Moreover:
\begin{enumerate}[label=$(\theenumi)$]
\item $P(x,y)\in\Ss^{n-1}$ and $t(x,y)\geq1$;
\item if $x\in\Ss^{n-1}$, then $P(x,y)=x$;
\item $P:K_\eta\to\Ss^{n-1}$ is Lipschitz.
\end{enumerate}
\end{lem}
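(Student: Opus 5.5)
We will derive the formula by solving the quadratic equation for the intersection of the ray with the sphere, and then check the three properties in turn. Writing $v=x-y$, the condition $|y+tv|^2=1$ reads
\[
|v|^2t^2+2(y\cdot v)t+|y|^2-1=0 .
\]
Since $|y|\le1$, the constant term is $\le0$, so the discriminant $(y\cdot v)^2+|v|^2(1-|y|^2)$ is nonnegative. The two roots therefore have product $\le0$, and the larger root is exactly $t(x,y)$, which is $\ge0$. For $t$ beyond the larger root we have $|y+tv|>1$, because the quadratic is positive there; this is why $t(x,y)$ gives the last point of the ray on the sphere. Consequently $|P(x,y)|=1$ by construction.

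For $t(x,y)\ge1$, we evaluate the quadratic $q(t)=|y+tv|^2-1$ at $t=1$ and find $q(1)=|x|^2-1\le0$. Since $q$ is an upward parabola, $q(1)\le0$ forces $1$ to lie between the two roots, so $1\le t(x,y)$. For (2), if $|x|=1$ then $q(1)=0$, so $1$ is a root. It remains to show that it is the larger root. The product of the roots equals $(|y|^2-1)/|v|^2\le0$, so the other root is $\le0<1$. Hence $t(x,y)=1$ and $P(x,y)=x$.

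For (3), we note that $K_\eta$ is compact and that on it $|v|\ge\eta>0$, so the denominator $|v|^2$ is bounded below by $\eta^2$. The main obstacle is the square root, which is only Lipschitz away from zero. We therefore bound the discriminant $D=(y\cdot v)^2+|v|^2(1-|y|^2)$ from below on $K_\eta$. If $|y|^2\le1-\eta^2/8$, then $D\ge\eta^2\cdot\eta^2/8$. Otherwise $|y|$ is close to $1$, and we would use
\[
D=|v|^2-|y|^2|v|^2+(y\cdot v)^2=|v|^2-\big(|y|^2|v|^2-(y\cdot v)^2\big).
\]
An alternative route, which is cleaner, is to write $\sqrt D=|v|^2t+y\cdot v$ and observe that $\sqrt D=\tfrac12 q'(t(x,y))=v\cdot P(x,y)$. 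We then need to show that $v\cdot P\ge c(\eta)>0$. This holds because $|P|=1\ge|x|$ and $|P|=1\ge|y|$ with $P=y+tv$, $t\ge1$, and expanding $|P-v|^2\le1$ gives
\[
|P-v|^2=|y+(t-1)v|^2\le1,
\]
which is true since it is a point on the segment between $y$ and $P$, both in $\ol{\B}$. So $1-2P\cdot v+|v|^2\le1$, that is, $P\cdot v\ge|v|^2/2\ge\eta^2/2$. Thus $D\ge\eta^4/4$ on $K_\eta$.

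With $D$ bounded below and $|v|^2\ge\eta^2$, the function $t$ is a composition of smooth functions of $(x,y)$ on a neighborhood of the compact set $K_\eta$. Hence $t$ is $C^1$ with bounded derivative. The set $K_\eta$ is not convex, so to pass from a bounded derivative to a Lipschitz bound we argue separately for near and far pairs. For pairs at distance $\le\delta$ (with $\delta$ small depending on $\eta$), the segment between them stays in a neighborhood where $|v|\ge\eta/2$ and $D\ge\eta^4/16$; we extend $t$ there by the same formula, and the mean value theorem gives the bound. For pairs at distance $>\delta$, we use $|P|=1$ to get $|P-P'|\le2\le(2/\delta)\,|(x,y)-(x',y')|$. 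Then $P=y+tv$ is Lipschitz as a product and sum of bounded Lipschitz functions.
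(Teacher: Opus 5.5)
Your proof is correct and has the same skeleton as the paper's: larger root of the quadratic, discriminant bounded away from zero, then smoothness plus the mean value theorem. The key step for (3), however, is done differently.

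\textbf{Discriminant bound.} The paper argues qualitatively for $\Delta=(y\cdot v)^2+|v|^2(1-|y|^2)$. If $\Delta=0$, then $|y|=1$ and $y\cdot v=0$, hence $|x|^2=1+|v|^2>1$. So $\Delta>0$ on $K_\eta$, and compactness gives some unspecified $c_\eta>0$. You instead use the identity $\sqrt{\Delta}=\tfrac12 q'(t(x,y))=v\cdot P(x,y)$ together with convexity ($|P-v|\le 1$). This gives the explicit, sharp bound $\sqrt{\Delta}\ge |v|^2/2$, i.e.\ $\Delta\ge\eta^4/4$. Geometrically, the chord through $y$ and $x$ has length $2\sqrt{\Delta}/|v|\ge |x-y|$; this is the same fact the paper uses later as $b\ge d$ in Lemma~\ref{lem:boundary-geometry}. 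Your route yields a Lipschitz constant depending explicitly on $\eta$, at the cost of a slightly longer computation than the paper's two-line contradiction.

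\textbf{Non-convexity of $K_\eta$.} You correctly note that $K_\eta$ is not convex. For $\eta/2\le|a|\le 1$, the points $(a,-a)$ and $(-a,a)$ lie in $K_\eta$, but their midpoint $(0,0)$ does not. So a bounded derivative on a neighborhood does not by itself give a global Lipschitz bound via the mean value theorem. The paper passes over this, and your near/far splitting supplies the missing step. Likewise, your sign and root-product arguments for $t\ge 1$ and for (2) make explicit what the paper phrases via the ``exit parameter''.

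\textbf{Write-up fixes.} Delete the abandoned case split on $|y|^2\le 1-\eta^2/8$. Justify $D\ge \eta^4/16$ on the $\delta$-neighborhood by uniform continuity of the polynomial $D$. Run the near/far argument directly for $P$ instead of switching between $t$ and $P$.
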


\begin{proof}
The intersections of the line $y+tv$ with $\Ss^{n-1}$ are the roots of
\[
        |y+tv|^2=1,
\]
or equivalently
\[
        |v|^2t^2+2(y\cdot v)t+|y|^2-1=0.
\]
The displayed value of $t(x,y)$ is the larger root and, therefore, gives the
last point at which the ray meets the closed unit ball. Hence
$P(x,y)\in\Ss^{n-1}$.

Since $x,y\in\ol{\B}$ and $\ol{\B}$ is convex, the segment
$y+t(x-y)$, $0\leq t\leq1$, lies in $\ol{\B}$. Therefore, the exit parameter
is at least $1$. If $x\in\Ss^{n-1}$, then $t=1$ is already the exit
parameter, so the larger root is $1$ and $P(x,y)=x$.

It remains to prove the Lipschitz assertion. On $K_\eta$ one has
$|v|\geq\eta$, so the denominator in the formula for $t$ is bounded away
from zero. The discriminant
\[
        \Delta=(y\cdot v)^2+|v|^2(1-|y|^2)
\]
is strictly positive on $K_\eta$. Indeed, if $\Delta=0$, then
$|y|=1$ and $y\cdot v=0$, whence
\[
        |x|^2=|y+v|^2
        =|y|^2+2y\cdot v+|v|^2
        =1+|v|^2>1,
\]
contrary to $x\in\ol{\B}$. Since $K_\eta$ is compact, there exists
$c_\eta>0$ such that $\Delta\geq c_\eta$ on $K_\eta$. The formula defining
$P$ is therefore $C^1$ on a neighborhood of $K_\eta$, with a bounded
derivative. The mean-value theorem gives the Lipschitz bound.
\end{proof}

The next estimate is the geometric input that later yields the identity
trace on the boundary.

\begin{lem}\label{lem:boundary-geometry}
For any $(x,y)\in K_\eta$,
\[
        |P(x,y)-x|
        \leq \frac{2}{\eta}(1-|x|).
\]

In particular, if $f:\B\to\R^n$ is a measurable map such that
$(x,f(x))\in K_\eta$ for a.e. $x\in\B$ and
$r(x)=P(x,f(x))$, then, for a.e. $x\in \B\backslash\{0\}$,
\[
\left|r(x)-\frac{x}{|x|}\right|
\leq \left(1+\frac{2}{\eta}\right)(1-|x|).
\]
\end{lem}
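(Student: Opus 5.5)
We prove the first inequality directly from the formula $P(x,y)=y+t(x,y)v$ with $v=x-y$, where $t=t(x,y)\geq 1$ by Lemma~\ref{lem:ray-map}. Then
\[
P(x,y)-x=(t-1)v,\qquad |P(x,y)-x|=(t-1)|v|.
\]
So it suffices to bound the extra ray length $s:=(t-1)|v|$ beyond $x$. Write $e=v/|v|$, so that $P=x+se$ with $s\geq0$ and $|x+se|=1$. The quadratic identity $|x+se|^2=1$ gives
\[
s^2+2s\,(x\cdot e)=1-|x|^2 .
\]
Hence
\[
s\bigl(s+2\,x\cdot e\bigr)=(1-|x|)(1+|x|)\leq 2(1-|x|).
\]

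The key step is a lower bound on $s+2x\cdot e$ of order $\eta$. Intuitively, the ray comes from $y$ through $x$ and has already travelled at least $|v|\geq\eta$ inside $\ol{\B}$. Consider the full chord of the line through $y$ and $x$ in $\ol{\B}$, parametrized as $x+\tau e$. Its endpoints are the two roots $\tau_-\le\tau_+=s$ of $\tau^2+2\tau(x\cdot e)-(1-|x|^2)=0$. Since $y=x-|v|e\in\ol{\B}$, we have $\tau_-\leq -|v|$. The sum of the roots is $-2x\cdot e$, so
\[
s+2x\cdot e=\tau_+-(\tau_++\tau_-)=-\tau_-\geq |v|\geq\eta.
\]
Combining the two displays gives $s\eta\leq 2(1-|x|)$, which is the claim. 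Equivalently, the product of the roots gives $s\cdot(-\tau_-)=1-|x|^2$, and this is perhaps the cleanest way to write it.

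For the second assertion, take $x$ in the full-measure set where $(x,f(x))\in K_\eta$ and $x\neq0$, and apply the first part with $y=f(x)$ to get $|r(x)-x|\leq \frac{2}{\eta}(1-|x|)$. Then use the triangle inequality together with
\[
\left|x-\frac{x}{|x|}\right|=1-|x| .
\]
The only point requiring care is the sign and root bookkeeping in the chord argument. That is where I expect the main (mild) obstacle: making sure $\tau_-\le -|v|$ is justified. It is justified because $y\in\ol{\B}$ lies on the line at parameter $-|v|$, and every parameter of the line lying in $\ol{\B}$ is in $[\tau_-,\tau_+]$.
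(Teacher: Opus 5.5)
Your proposal is correct and is essentially the paper's own argument. Both write $P=x+se$ on the chord through $y$ and $x$, use $y\in\overline{\mathbb{B}}$ to place the other endpoint at distance at least $|x-y|\geq\eta$ behind $x$, and read off $s\cdot(-\tau_-)=1-|x|^2\leq 2(1-|x|)$ from the product of the roots. The second assertion then follows, exactly as in the paper, from the triangle inequality with $\bigl|x-\frac{x}{|x|}\bigr|=1-|x|$.
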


\begin{proof}
Fix $(x,y)\in K_\eta$ and write
\[
        d=|x-y|,\quad e=\frac{x-y}{|x-y|}.
\]
Since $P(x,y)$ lies on the ray from $y$ through $x$ and beyond $x$, there is
$a\geq0$ such that
\[
        P(x,y)=x+ae.
\]
Let the other intersection point of the same line with $\Ss^{n-1}$ be
$x-be$, with $b\geq0$. The point $y=x-de$ belongs to $\ol{\B}$, and
$d\geq\eta$. Since the intersection of this line with $\ol{\B}$ is exactly
the segment between its two endpoints on the sphere, we have $b\geq d$.

The two roots $s=a$ and $s=-b$ of
\[
        |x+se|^2=1
\]
satisfy
\[
        (s-a)(s+b)=s^2+2(x\cdot e)s+|x|^2-1.
\]
Comparing constant terms gives $ab=1-|x|^2$. Hence
\[
        a=\frac{1-|x|^2}{b}
        \leq \frac{1-|x|^2}{\eta}
        \leq \frac{2}{\eta}(1-|x|),
\]
which proves the first estimate.

If $r(x)=P(x,f(x))$, then
\[
\left|r(x)-\frac{x}{|x|}\right|
\leq |r(x)-x|+\left|x-\frac{x}{|x|}\right|
\leq \frac{2}{\eta}(1-|x|)+(1-|x|),
\]
for any $x\neq0$.
\end{proof}

\section{The \texorpdfstring{$\VMO$}{} sphere-valued map associated with a separated counterexample}

We now assume, in contradiction, that there exist $\eta>0$ and
$f\in\VMO(\B;\R^n)\cap L^\infty(\B;\R^n)$ such that
\be
|f(x)|\leq1,\quad |f(x)-x|\geq\eta
\quad\text{for a.e. }x\in\B.
\label{eq:separation}
\ee
The case $\eta\geq2$ is impossible because
\[
|f(x)-x|\leq |f(x)|+|x|<2\quad\text{ for a.e. }x\in\B.
\]
Thus, after reducing
$\eta$ if necessary, we may assume $0<\eta<2$.

\begin{lem}\label{lem:vmo-retraction}
Under the assumption \eqref{eq:separation}, the map
\[
r(x)=P(x,f(x))
\]
is well defined a.e. in $ \B $ and belongs to $\VMO(\B;\Ss^{n-1})$. 

Moreover,
\be\label{eq:r-boundary-estimate}
\left|r(x)-\f{x}{|x|}\right|
\leq A_\eta(1-|x|)
\quad\text{for a.e. }x\in\B\backslash\{0\},
\ee
where
\[
A_\eta=1+\f{2}{\eta}.
\]
\end{lem}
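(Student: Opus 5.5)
Well-definedness comes first. By \eqref{eq:separation}, for a.e. $x\in\B$ we have $(x,f(x))\in K_\eta$, since $x\in\B\subset\ol{\B}$, $|f(x)|\leq1$ and $|x-f(x)|\geq\eta$. So $r(x)=P(x,f(x))$ is defined a.e. It is measurable because $P$ is continuous on $K_\eta$ (Lemma \ref{lem:ray-map}) and $x\mapsto(x,f(x))$ is measurable. By Lemma \ref{lem:ray-map}(1), $r$ takes values in $\Ss^{n-1}$. The boundary estimate \eqref{eq:r-boundary-estimate} is exactly the second part of Lemma \ref{lem:boundary-geometry} with $A_\eta=1+2/\eta$, so nothing new is needed there.

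The main step is to show $r\in\VMO$. The plan is to use that $r=\Phi\circ F$, where $F(x)=(x,f(x))$ and $\Phi=P$ is Lipschitz on $K_\eta$ with some constant $L_\eta$ (Lemma \ref{lem:ray-map}(3)). The one subtlety is that averages of $F$ need not lie in $K_\eta$. We therefore avoid composing with averages and use the standard equivalent form of mean oscillation:
\[
\fint_B|g-g_B|\leq \fint_B\fint_B|g(z)-g(w)|\ud z\,\ud w\leq 2\fint_B|g-c|\quad\text{for every constant }c.
\]
Fix a ball $B=B_\rho(a)\subset\subset\B$. For a.e. $z,w\in B$, both $F(z)$ and $F(w)$ lie in $K_\eta$, so
\[
|r(z)-r(w)|\leq L_\eta\bigl(|z-w|+|f(z)-f(w)|\bigr).
\]
The Lipschitz bound only needs the two points themselves to be in $K_\eta$, which they are; no segment between them is required. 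Averaging over $z,w\in B$ gives
\[
\fint_B|r-r_B|\leq L_\eta\Bigl(2\rho+2\fint_B|f-f_B|\Bigr).
\]
It follows that $\omega_r(\rho)\leq 2L_\eta(\rho+\omega_f(\rho))\to0$ as $\rho\downarrow0$. Also $r\in\BMO$, and even $r\in L^\infty$ since $|r|=1$.

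I expect the main obstacle to be the Lipschitz property of $P$ on all of $K_\eta$, as opposed to only locally. Lemma \ref{lem:ray-map} derives it from a bounded derivative on a neighborhood of the compact set $K_\eta$ via the mean-value theorem, and $K_\eta$ need not be convex. If that global constant were in doubt, I would fall back on uniform continuity of $P$ on the compact set $K_\eta$. That gives a modulus $\theta$ with $|r(z)-r(w)|\leq\theta(|z-w|+|f(z)-f(w)|)$, and one can take $\theta$ bounded by $2$ since $|r|=1$. Splitting the double average according to whether $|f(z)-f(w)|$ is small or large, and applying Chebyshev's inequality to the large part, then still forces the oscillation of $r$ to tend to zero. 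Either route yields $r\in\VMO(\B;\Ss^{n-1})$.
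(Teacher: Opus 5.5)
Your proof is correct and has the same skeleton as the paper's: write $r=P\circ F$ with $F(x)=(x,f(x))$, and transfer the small-scale oscillation of $F$ (at most $\omega_f(\rho)+\rho$) through the Lipschitz map $P$. The difference is in the composition step.

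The paper first extends $P$ by McShane to a globally Lipschitz $\widetilde P:\R^{2n}\to\R^n$. It then cites, without proof, that composing with a globally Lipschitz map preserves $\VMO$. The standard statement of that fact is for globally defined maps, because its quickest proof compares $\widetilde P(F)$ with $\widetilde P(F_B)$, and $F_B$ may leave $K_\eta$.

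You instead prove the composition estimate directly from $\fint_B|g-g_B|\le\fint_B\fint_B|g(z)-g(w)|\,\ud z\,\ud w\le 2\fint_B|g-c|$. This argument evaluates $P$ only at pairs $F(z),F(w)$ that lie in $K_\eta$, so the extension is unnecessary. Your route is self-contained and gives the explicit bound $\omega_r(\rho)\le 2L_\eta(\rho+\omega_f(\rho))$. The paper's route is shorter only because it outsources exactly this argument to a standard result.

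Your worry about the Lipschitz constant on the non-convex set $K_\eta$ is unfounded. Choose $\epsilon>0$ so that the closed $\epsilon$-neighbourhood of $K_\eta$ lies in the open set where the formula for $P$ is $C^1$; by compactness its derivative is bounded there. For $p,q\in K_\eta$ with $|p-q|<\epsilon$, the segment $[p,q]$ stays in that neighbourhood, so the mean-value theorem applies. For $|p-q|\ge\epsilon$, one simply has $|P(p)-P(q)|\le 2\le (2/\epsilon)|p-q|$.

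Your uniform-continuity fallback is nevertheless valid. It shows that continuity of $P$ on the compact set $K_\eta$ alone would already give $\VMO$, at the cost of losing the linear control of $\omega_r$ by $\omega_f$.
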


\begin{proof}
By Lemma \ref{lem:ray-map}, $P$ is Lipschitz on $K_\eta$. Extending each
component of $P$ from $K_\eta$ to $\R^{2n}$ by the McShane extension
theorem (see \cite[Theorem 5.1]{Sim83}), we obtain a global Lipschitz map
$\widetilde P:\R^{2n}\to\R^n$ such that $\widetilde P=P$ on $K_\eta$ and $ \Lip\wt{P}=\Lip P $.

The map $x\mapsto (x,f(x))$ belongs to $\VMO(\B;\R^{2n})$. Indeed, for any
ball $B_\rho(a)\subset\subset\B$,
\[
        \fint_{B_\rho(a)} |x-x_{B_\rho(a)}|\ud x
        \leq \rho.
\]
Thus, its small-scale mean oscillation is bounded by $\omega_f(\rho)+\rho$.
The composition with a global Lipschitz map is preserved $\VMO$. Since
$(x,f(x))\in K_\eta$ for a.e. $x\in\B$, we have
\[
        \widetilde P(x,f(x))=P(x,f(x))\in\Ss^{n-1}
        \quad\text{for a.e. }x.
\]
Thus $r\in\VMO(\B;\Ss^{n-1})$. The estimate \eqref{eq:r-boundary-estimate} follows from Lemma \ref{lem:boundary-geometry}.
\end{proof}

\section{Mollifying sphere-valued \texorpdfstring{$\VMO$}{} maps}

The next step is to replace the a.e. defined sphere-valued map by continuous
sphere-valued maps on compact subsets. The key observation here is that local averages of
a $\VMO$ map with values in the sphere do not approach the origin at small
scales.

Let $\rho\in C_0^\infty(\B)$ be non-negative and satisfy
\[
\int_{\R^n}\rho\ud x=1.
\]
Set $\rho_\delta(x)=\f{1}{\delta^{n}}\rho(\f{x}{\delta})$.

\begin{lem}\label{lem:mollification-nonzero}
Let $u\in\VMO(\B;\Ss^{n-1})$. For any $\alpha>0$ there exists
$\delta_0>0$ such that, whenever $0<\delta<\delta_0$ and
$B_\delta(x)\subset\subset\B$, the local average
\[
u_\delta(x)=\int_{\B}\rho_\delta(x-y)u(y)\ud y
\]
satisfies
\[
|u_\delta(x)|\geq1-\alpha.
\]

In particularly, if $K\subset\B$ is compact and
$0<\delta<\min\{\delta_0,\dist(K,\partial\B)\}$, then the normalized
mollification
\[
R_\delta(x)=\f{u_\delta(x)}{|u_\delta(x)|}
\]
is well defined and continuous on $K$.
\end{lem}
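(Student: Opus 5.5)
The plan is to compare $|u_\delta(x)|$ with the mean oscillation of $u$ on the ball $B_\delta(x)$, exploiting that $|u|=1$ a.e. Since $\supp\rho\subset\B$, the kernel $\rho_\delta(x-\cdot)$ is supported in $B_\delta(x)$ and is bounded by $\|\rho\|_\infty\delta^{-n}$. Hence for any constant vector $c\in\R^n$,
\[
|u_\delta(x)-c|=\left|\int_{B_\delta(x)}\rho_\delta(x-y)(u(y)-c)\ud y\right|
\leq \|\rho\|_\infty\,|B_1|\fint_{B_\delta(x)}|u(y)-c|\ud y .
\]
Choose $c=u_{B_\delta(x)}$, the average over the ball, and set $C_\rho:=\|\rho\|_\infty|B_1|$. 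Since $B_\delta(x)\subset\subset\B$, the last integral is at most $\omega_u(\delta')$ for any $\delta'>\delta$.

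Next we show that the average $u_{B_\delta(x)}$ itself has modulus close to $1$. For a.e. $y$ we have $|u(y)|=1$, so
\[
\bigl|1-|u_{B_\delta(x)}|\bigr| = \bigl||u(y)|-|u_{B_\delta(x)}|\bigr|\leq |u(y)-u_{B_\delta(x)}| .
\]
Averaging over $y\in B_\delta(x)$ gives $|u_{B_\delta(x)}|\geq 1-\omega_u(\delta')$. Combining with the previous step yields
\[
|u_\delta(x)|\geq 1-(1+C_\rho)\,\omega_u(\delta') .
\]
Since $u\in\VMO$, we can pick $\delta_0$ with $(1+C_\rho)\omega_u(2\delta_0)\leq\alpha$ and take $\delta'=2\delta<2\delta_0$. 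Here $\omega_u$ is nondecreasing in $\rho$, so this choice works for every $\delta<\delta_0$.

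For the second assertion, take $K$ compact and $\delta<\min\{\delta_0,\dist(K,\partial\B)\}$. Then $B_\delta(x)\subset\subset\B$ for every $x\in K$. Fix $\alpha=1/2$, so that $|u_\delta|\geq1/2$ on $K$. The function $u_\delta$ is continuous (indeed smooth) on a neighborhood of $K$, because it is the convolution of a bounded function with a smooth compactly supported kernel. Normalization is continuous away from the origin, so $R_\delta$ is well defined and continuous on $K$.

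The main obstacle is bounding $|u_\delta - u_{B_\delta(x)}|$ by the mean oscillation, since the mollifier weight is not the uniform average. This is resolved by the pointwise bound $\rho_\delta\leq\|\rho\|_\infty\delta^{-n}$ together with $\supp\rho\subset\B$. Care is also needed that the ball used in $\omega_u$ has radius strictly less than the parameter, which is why we use $\omega_u(2\delta)$.
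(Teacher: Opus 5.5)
Your proof is correct and follows essentially the same route as the paper: the pointwise bound $\rho_\delta\le C\delta^{-n}$ compares $u_\delta(x)$ with $u_{B_\delta(x)}$ up to $C\omega_u$, and $|u|=1$ a.e. turns the mean oscillation into a lower bound on the modulus. The paper applies this last step to the weighted oscillation about $u_\delta(x)$ rather than to the uniform average $u_{B_\delta(x)}$, which is immaterial, and your use of $\omega_u(2\delta)$ correctly handles the strict inequality $r<\rho$ in the definition of $\omega_u$, a point the paper glosses over by writing $\omega_u(\delta)$.
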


\begin{proof}
Fix $x$ with $B_\delta(x)\subset\subset\B$ and write $B_x=B_\delta(x)$.
Since $\rho_\delta\leq C\delta^{-n}$ and
$\supp\rho_\delta(x-\cdot)\subset B_x$,
\[
        \int \rho_\delta(x-y)|u(y)-u_{B_x}|\ud y
        \leq
        C\fint_{B_x}|u-u_{B_x}|\ud y
        \leq C\omega_u(\delta).
\]
Consequently,
\[
        |u_\delta(x)-u_{B_x}|
        \leq C\omega_u(\delta),
\]
and hence
\[
        \int\rho_\delta(x-y)|u(y)-u_\delta(x)|\ud y
        \leq C\omega_u(\delta).
\]
Since $|u(y)|=1$ for a.e. $y\in\B$,
\[
        1-|u_\delta(x)|
        \leq
        \int\rho_\delta(x-y)\big||u(y)|-|u_\delta(x)|\big|\ud y
        \leq
        \int\rho_\delta(x-y)|u(y)-u_\delta(x)|\ud y
        \leq C\omega_u(\delta).
\]
Because $u\in\VMO$, $\omega_u(\delta)\to0^+$ as $\delta\downarrow0$. This
proves the non-vanishing property. The continuity of $u_\delta$ follows from
the smoothness of the mollifier and the bound $u\in L^\infty$.
\end{proof}

We now apply the preceding lemma to the particular map obtained from the
separated counterexample. The boundary estimate from Lemma
\ref{lem:vmo-retraction} ensures that the normalized mollification agrees,
up to a small error, with the radial projection near the boundary.

\begin{lem}\label{lem:mollified-boundary}
Let $r$ be the map obtained in Lemma \ref{lem:vmo-retraction}. There exist
$\tau\in(0,\f{1}{10})$ and $\delta\in(0,\tau)$ such that the normalized
mollification
\[
R_\delta(x)=\f{r_\delta(x)}{|r_\delta(x)|},\quad
r_\delta(x)=\int_{\B}\rho_\delta(x-y)r(y)\ud y,
\]
is well defined on $\ol B_{1-2\tau}$ and satisfies
\be\label{eq:R-close-to-radial}
\left|R_\delta(x)-\f{x}{|x|}\right|<\f12
\ee
for any $x$ with $|x|\in[1-3\tau,1-2\tau]$.
\end{lem}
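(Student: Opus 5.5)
The plan is to combine the non-vanishing result of Lemma \ref{lem:mollification-nonzero} with the boundary estimate \eqref{eq:r-boundary-estimate} of Lemma \ref{lem:vmo-retraction}. The mollification at a point $x$ only sees $r$ on $B_\delta(x)$. On the annulus $|x|\in[1-3\tau,1-2\tau]$ with $\delta<\tau$, every $y\in B_\delta(x)$ satisfies $1-|y|\leq 4\tau$, so $r(y)$ is within $4A_\eta\tau$ of $y/|y|$. In turn $y/|y|$ is close to $x/|x|$. Averaging then makes $r_\delta(x)$ close to $x/|x|$, and normalizing preserves this.

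\textbf{Choice of $\tau$.} First fix $\tau\in(0,\f1{10})$ so small that $4A_\eta\tau+4\tau\leq\f18$; for instance $\tau<1/(32(A_\eta+1))$ works. Next apply Lemma \ref{lem:mollification-nonzero} with $u=r$ and $\alpha=\f12$ to get $\delta_0$. Then pick $\delta<\min\{\delta_0,\tau\}$. Since $\dist(\ol B_{1-2\tau},\partial\B)=2\tau>\delta$, the lemma shows that $R_\delta$ is well defined and continuous on $\ol B_{1-2\tau}$, with $|r_\delta|\geq\f12$ there.

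\textbf{Closeness of $r_\delta(x)$ to $x/|x|$.} Take $x$ with $|x|\in[1-3\tau,1-2\tau]$ and $y\in B_\delta(x)$. Then $|y|\geq 1-4\tau>\f12$, and
\[
\left|\f{y}{|y|}-\f{x}{|x|}\right|\leq \f{2|x-y|}{|x|}\leq 4\delta<4\tau.
\]
Combining this with \eqref{eq:r-boundary-estimate} gives $|r(y)-x/|x||\leq 4A_\eta\tau+4\tau\leq\f18$ for a.e. $y\in B_\delta(x)$. Since $\rho_\delta(x-\cdot)$ is a probability density supported in $B_\delta(x)$, it follows that $|r_\delta(x)-x/|x||\leq\f18$.

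\textbf{Normalization.} For a unit vector $e$ and any $v$ with $|v-e|\leq\beta<1$, we have
\[
\left|\f{v}{|v|}-e\right|\leq \left|\f{v}{|v|}-v\right|+|v-e| = \big|1-|v|\big|+|v-e|\leq 2\beta.
\]
With $\beta=\f18$ this gives $|R_\delta(x)-x/|x||\leq\f14<\f12$, which is \eqref{eq:R-close-to-radial}. Note that this step only needs $|r_\delta(x)|>0$ on the annulus, and the closeness estimate already gives $|r_\delta(x)|\geq\f78$ there. The role of Lemma \ref{lem:mollification-nonzero} is to guarantee well-definedness on the whole ball $\ol B_{1-2\tau}$.

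\textbf{Main obstacle.} The argument is elementary, but the constants must be ordered correctly. $\tau$ has to be chosen first, depending only on $\eta$. Only afterwards is $\delta$ chosen, below both $\tau$ and the $\VMO$ threshold $\delta_0$. Then the annulus estimate holds uniformly, without any further appeal to $\VMO$.
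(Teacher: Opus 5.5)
Your proof is correct and follows essentially the same route as the paper: you fix $\tau$ from the boundary estimate \eqref{eq:r-boundary-estimate}, take $\delta$ below both $\tau$ and the threshold from Lemma \ref{lem:mollification-nonzero}, average a pointwise bound of $\tfrac18$ over $B_\delta(x)$, and normalize at the cost of a factor $2$. The only cosmetic difference is that you absorb the angular error $4\delta<4\tau$ into the choice of $\tau$, whereas the paper bounds it by $C\delta$ and then shrinks $\delta$ further.
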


\begin{proof}
Choose $\tau\in(0,\f{1}{10})$ so small that
$4A_\eta\tau<\f{1}{32}$, where $A_\eta$ is as in Lemma
\ref{lem:vmo-retraction}. Then choose $\delta\in(0,\f{\tau}{4})$ so small
that Lemma \ref{lem:mollification-nonzero} gives
$|r_\delta|\geq\f{3}{4}$ on $\ol B_{1-2\tau}$.

Let $x$ satisfy $|x|\in[1-3\tau,1-2\tau]$. If $|x-y|<\delta$, then
$|y|\geq1-4\tau>\f{1}{2}$ and
\[
1-|y|\leq 1-|x|+|x-y|\leq 3\tau+\delta.
\]
Using \eqref{eq:r-boundary-estimate}, we get for a.e. such $ y $
\[
\left|r(y)-\f{y}{|y|}\right|
\leq A_\eta(3\tau+\delta).
\]
Moreover, since $|x|,|y|\geq\f{1}{2}$,
\[
\left|\f{y}{|y|}-\f{x}{|x|}\right|\leq C|x-y|\leq C\delta.
\]
Therefore
\[
\left|r_\delta(x)-\f{x}{|x|}\right|
\leq A_\eta(3\tau+\delta)+C\delta.
\]
After decreasing $\delta$, the right-hand side is less than $\f{1}{8}$.
Since $|r_\delta(x)|\geq\f{3}{4}$ and $\f{x}{|x|}$ has norm $1$, we also have
\[
\left|R_\delta(x)-r_\delta(x)\right|
=\left|1-|r_\delta(x)|\right|
\leq \left|r_\delta(x)-\f{x}{|x|}\right|.
\]
Therefore
\[
\left|R_\delta(x)-\f{x}{|x|}\right|
\leq2\left|r_\delta(x)-\f{x}{|x|}\right|
<\f{1}{2}.
\]
This proves \eqref{eq:R-close-to-radial}.
\end{proof}

\section{Construction of a continuous retraction}

We now use the normalized mollification to construct a genuine continuous
retraction of the closed ball onto the sphere. This is the step in which
mollification replaces the use of degree theory for $\VMO$ maps.

\begin{prop}\label{prop:continuous-retraction}
The assumption \eqref{eq:separation} implies the existence of a continuous map
\[
        \mathcal R:\ol\B\to\Ss^{n-1}
\]
such that
\[
        \mathcal R(x)=x
        \quad\text{for any }x\in\partial\B.
\]
\end{prop}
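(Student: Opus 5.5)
We build $\mathcal R$ by gluing the normalized mollification $R_\delta$ from Lemma \ref{lem:mollified-boundary} on the inner ball to the radial projection $x/|x|$ near the sphere. The two pieces are interpolated across the annulus $1-3\tau\le|x|\le 1-2\tau$, where \eqref{eq:R-close-to-radial} keeps them within distance $\f12$ of each other.

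Fix $\tau$ and $\delta$ as in Lemma \ref{lem:mollified-boundary}, so that $R_\delta$ is well defined and continuous on $\ol B_{1-2\tau}$ with values in $\Ss^{n-1}$ (by Lemma \ref{lem:mollification-nonzero}). Choose a continuous cutoff $\phi:[0,1]\to[0,1]$ with $\phi(s)=0$ for $s\le 1-3\tau$ and $\phi(s)=1$ for $s\ge 1-2\tau$, for instance the piecewise linear one. Then define
\[
V(x)=
\begin{cases}
R_\delta(x), & |x|\le 1-3\tau,\\
(1-\phi(|x|))R_\delta(x)+\phi(|x|)\f{x}{|x|}, & 1-3\tau\le|x|\le1-2\tau,\\
\f{x}{|x|}, & 1-2\tau\le|x|\le1.
\end{cases}
\]
Each piece is continuous on its closed domain: $R_\delta$ on $\ol B_{1-2\tau}$, and $x/|x|$ on $|x|\ge1-3\tau>0$. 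The pieces agree on the interfaces $|x|=1-3\tau$ and $|x|=1-2\tau$, so $V$ is continuous on $\ol\B$ by the pasting lemma.

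The key step is that $V$ never vanishes. On the middle annulus, $V(x)$ is a convex combination of two unit vectors $a=R_\delta(x)$ and $b=x/|x|$ with $|a-b|<\f12$ by \eqref{eq:R-close-to-radial}. Hence
\[
|V(x)|\ge |b|-(1-\phi)|a-b|>1-\f12=\f12.
\]
On the other two regions $|V|=1$. We can therefore set $\mathcal R=V/|V|$, which is continuous from $\ol\B$ to $\Ss^{n-1}$. For $|x|=1$ we have $V(x)=x$, so $\mathcal R(x)=x$ on $\partial\B$.

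The only delicate point is that $R_\delta$ must be defined and $\Ss^{n-1}$-valued on the whole closed ball $\ol B_{1-2\tau}$, not just near its boundary, so that no interior zero of $r_\delta$ appears. Lemma \ref{lem:mollified-boundary} provides exactly this, through the uniform lower bound $|r_\delta|\ge\f34$. Beyond that the argument is routine pasting. Combined with the classical no-retraction theorem (Theorem \ref{classical}), this proposition contradicts \eqref{eq:separation} and proves Theorem \ref{thm:main}.
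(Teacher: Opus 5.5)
Your proposal is correct and follows the paper's proof: the same cutoff interpolation between $R_\delta$ and $x/|x|$ on the annulus $1-3\tau\le|x|\le1-2\tau$, with non-vanishing of the interpolant coming from \eqref{eq:R-close-to-radial}. The only differences are cosmetic: you normalize once globally via $V/|V|$ and check $|V|>\f12$ by the triangle inequality, whereas the paper normalizes only on the annulus and uses $R_\delta(x)\cdot\nu(x)>0$.
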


\begin{proof}
Let $\tau$ and $\delta$ be chosen as in Lemma \ref{lem:mollified-boundary},
and let $R_\delta=\f{r_\delta}{|r_\delta|}$ on $\ol B_{1-2\tau}$. Choose a
continuous cutoff $\chi:[0,1]\to[0,1]$ such that
\[
\chi(s)=0\quad\text{for }s\leq1-3\tau,\quad
\chi(s)=1\quad\text{for }s\geq1-2\tau.
\]
For $x\neq0$, put
\[
\nu(x)=\f{x}{|x|}.
\]
On the annulus $|x|\in[1-3\tau,1-2\tau]$, define
\[
W(x)=(1-\chi(|x|))R_\delta(x)+\chi(|x|)\nu(x).
\]
By \eqref{eq:R-close-to-radial}, the vectors $R_\delta(x)$ and $\nu(x)$ are
in a common open hemisphere. Equivalently,
\[
        R_\delta(x)\cdot\nu(x)>0,
\]
and therefore $W(x)\neq0$ throughout the interpolation annulus.

Define
\[
        \mathcal R(x)=
        \begin{cases}
        R_\delta(x),& |x|\in[0,1-3\tau),\\[0.4em]
        \dfrac{W(x)}{|W(x)|},&|x|\in[1-3\tau,1-2\tau],\\[0.9em]
        \dfrac{x}{|x|},&|x|\in(1-2\tau,1].
        \end{cases}
\]
The three definitions agree on the two interfaces by the choice of $\chi$.
Thus, $\mathcal R$ is continuous on $\ol\B$, takes values in $\Ss^{n-1}$,
and satisfies $\mathcal R(x)=x$ on $\partial\B$.
\end{proof}

The contradiction will be obtained from the following standard no-retraction
consequence of Brouwer's fixed point theorem.

\begin{lem}\label{lem:no-cont-retraction}
There is no continuous map
\[
        \mathcal R:\ol\B\to\Ss^{n-1}
\]
such that $\mathcal R(x)=x$ for any $x\in\partial\B$.
\end{lem}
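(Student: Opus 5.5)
We will derive the no-retraction statement directly from Brouwer's fixed point theorem (Theorem \ref{classical}). The argument is by contradiction. Suppose that $\mathcal R:\ol\B\to\Ss^{n-1}$ is continuous and $\mathcal R(x)=x$ for every $x\in\partial\B$.

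First, compose $\mathcal R$ with the antipodal map and set
\[
g(x)=-\mathcal R(x),\qquad x\in\ol\B .
\]
Then $g$ is continuous. It maps $\ol\B$ into $\Ss^{n-1}\subset\ol\B$, so $g\in C^0(\ol\B,\ol\B)$. By Theorem \ref{classical}, there exists $x_0\in\ol\B$ with $g(x_0)=x_0$.

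Second, we locate $x_0$. Because $g$ takes values in $\Ss^{n-1}$, we have $|x_0|=|g(x_0)|=1$, so $x_0\in\partial\B$. The boundary condition then gives $\mathcal R(x_0)=x_0$. Hence
\[
x_0=g(x_0)=-\mathcal R(x_0)=-x_0,
\]
which forces $x_0=0$. This contradicts $|x_0|=1$, so no such retraction exists.

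No step is a genuine obstacle once Theorem \ref{classical} is assumed. The only point requiring care is the observation that the fixed point lies on the sphere, since this is what allows the boundary condition to be applied. For $n=1$ the same argument works verbatim: $\Ss^0=\{\pm1\}$, and it also follows directly from connectedness of $[-1,1]$. Combined with Proposition \ref{prop:continuous-retraction}, this lemma contradicts \eqref{eq:separation} and proves Theorem \ref{thm:main}.
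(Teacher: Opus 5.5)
Your proof is correct and matches the paper's argument step for step. You compose $\mathcal R$ with the antipodal map, apply Theorem \ref{classical}, note that the fixed point must lie on $\Ss^{n-1}$, and use the boundary condition to get the impossible identity $x_0=-x_0$.
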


\begin{proof}
If such a map existed, define
\[
        T(x)=-\mathcal R(x),
        \quad x\in\ol\B.
\]
Then $T$ is a continuous map from $\ol\B$ to $\ol\B$. By the classical
Brouwer's fixed point theorem, Theorem \ref{classical}, there exists
$x_0\in\ol\B$ such that $T(x_0)=x_0$. Since $T(x_0)\in\Ss^{n-1}$, one has
$x_0\in\Ss^{n-1}$. On the boundary, $\mathcal R(x_0)=x_0$, and therefore
\[
x_0=T(x_0)=-\mathcal R(x_0)=-x_0,
\]
which is impossible because $|x_0|=1$.
\end{proof}

\section{Proof of the main theorem}

\subsection{Proof of Theorem \ref{thm:main}}

Assume by contradiction that for some $\eta>0$,
\[
        |f(x)-x|\geq\eta
        \quad\text{for a.e. }x\in\B.
\]
By Lemma \ref{lem:vmo-retraction}, this produces a map
$r\in\VMO(\B;\Ss^{n-1})$ that satisfies the quantitative boundary estimate
\eqref{eq:r-boundary-estimate}. Lemmas \ref{lem:mollification-nonzero} and
\ref{lem:mollified-boundary}, followed by Proposition
\ref{prop:continuous-retraction}, then produce a continuous retraction
\[
\mathcal R:\ol\B\to\Ss^{n-1},\quad
\mathcal R|_{\partial\B}=\id_{\partial\B}.
\]
This contradicts the classical no-retraction Lemma
\ref{lem:no-cont-retraction}. Hence, such $\eta$ does not exist.

If, for some $\va>0$, the set
\[
        \{x\in\B: |f(x)-x|<\va\}
\]
had measure zero, then $|f(x)-x|\geq\va$ for a.e. $x\in\B$,
contradicting what we have just proved. Therefore,
\[
\cL^n(\{x\in\B: |f(x)-x|<\va\})>0
\]
for any $\va>0$.

\subsection{The one-dimensional case}

When $n=1$, we give a direct proof without using Theorem \ref{classical}.

\begin{lem}\label{lem:one-dimensional}
Every map $u\in\VMO([-1,1];\{-1,1\})$ is almost everywhere constant on $(-1,1)$.
Consequently, there is no $r\in\VMO([-1,1];\{-1,1\})$ satisfying
\[
        \lim_{\delta\downarrow0}
        \frac1{2\delta}
        \int_{(-1,-1+\delta)\cup(1-\delta,1)}
        |r(x)-\sgn x|\ud x
        =0.
\]
\end{lem}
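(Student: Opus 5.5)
The plan is to show that a $\{-1,1\}$-valued $\VMO$ function on the interval cannot jump. We then read off the second assertion from the first.

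\textbf{Step 1 (small-scale intervals are nearly monochromatic).} For an interval $I\subset\subset(-1,1)$, let $p_I\in[0,1]$ be the proportion of $I$ on which $u=1$. Then $u_I=2p_I-1$. A direct computation gives
\[
\fint_I|u-u_I|\,\ud x=p_I\cdot 2(1-p_I)+(1-p_I)\cdot 2p_I=4p_I(1-p_I).
\]
Since $\omega_u(\rho)\to0$, we may fix $\rho_0$ with $\omega_u(\rho_0)<\tfrac12$. Then every interval $I\subset\subset(-1,1)$ of length less than $\rho_0$ has $p_I(1-p_I)<\tfrac18$. Consequently $p_I\notin[\tfrac14,\tfrac34]$, because $\tfrac14\cdot\tfrac34=\tfrac3{16}>\tfrac18$.

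\textbf{Step 2 (continuity argument).} Fix the length $\ell<\rho_0$ and consider $g(a)=p_{(a-\ell/2,a+\ell/2)}$ for $a$ in a compact subinterval $J$ of $(-1+\ell/2,\,1-\ell/2)$. The function $g$ is continuous in $a$, since it is an integral of a bounded function over a moving window. By Step 1 it never enters $[\tfrac14,\tfrac34]$. Hence it lies entirely in $[0,\tfrac14)$ or entirely in $(\tfrac34,1]$.

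Say it is the latter. Then, as $\ell\downarrow0$, the Lebesgue differentiation theorem gives $u(a)=1$ for a.e. $a\in J$. To justify this, note that at a Lebesgue point $a$ with $u(a)=-1$ we would have $p\to0$ for small windows. The choice of side could a priori depend on $\ell$. However, for a fixed Lebesgue point with $u(a)=1$, every sufficiently small $\ell$ puts $g$ near $1$, which forces the upper side. So all small $\ell$ choose the same side, and no Lebesgue point with value $-1$ exists in $J$.

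Exhausting $(-1,1)$ by increasing compact intervals $J$ shows that $u$ is a.e. constant on $(-1,1)$. The side is consistent across exhaustions because the intervals overlap in positive measure.

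\textbf{Step 3 (consequence).} Suppose $r$ has the stated boundary behaviour. By the first part, $r\equiv c\in\{-1,1\}$ a.e. Then
\[
\frac1{2\delta}\int_{(-1,-1+\delta)\cup(1-\delta,1)}|c-\sgn x|\,\ud x=\frac{1}{2\delta}\cdot 2\delta=1
\]
for small $\delta$, because the integrand equals $2$ on exactly one of the two end intervals and $0$ on the other. This does not tend to $0$, which is a contradiction.

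The main obstacle is the bookkeeping in Step 2. We must ensure that the choice between the two ranges of $g$ is uniform in $\ell$ and across the exhausting intervals, and we must handle the fact that the $\VMO$ definition only sees intervals compactly contained in $(-1,1)$. The intermediate-value trick on the window average, combined with overlapping windows, should handle both.
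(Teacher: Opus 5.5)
Your proof is correct and takes essentially the paper's approach: the sliding-window proportion of $\{u=1\}$ is continuous, the intermediate value theorem applies to it, and Lebesgue differentiation is used at two points where $u$ takes different values (your Step 3 matches the paper exactly). The paper argues by contradiction instead, taking density points $a\in E$, $b\notin E$ and an exactly half-balanced window $J_h$ whose mean oscillation is $1$, which avoids your bookkeeping about the side depending on $\ell$ and on the exhausting intervals.
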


\begin{proof}
Let $u:(-1,1)\to\{-1,1\}$ be measurable and suppose that $u$ is not a.e.
constant. Set
\[
E=\{x:u(x)=1\}.
\]
Then both $E$ and its complement have positive measures. Choose a Lebesgue
density point $a\in E$ and a Lebesgue density point
$b\in (-1,1)\backslash E$, with $a\neq b$.

For $h>0$ small enough so that the intervals below are contained in
$(-1,1)$, define
\[
\theta_h(c)=\frac{\cL^1(E\cap(c-h,c+h))}{2h}.
\]
The function $\theta_h$ is continuous in $c$, since it is the convolution of
$1_E$ with the normalized interval indicator $(2h)^{-1}1_{(-h,h)}$. By the
density properties of $a$ and $b$, for all sufficiently small $h$,
\[
\theta_h(a)>\frac{3}{4},\quad\theta_h(b)<\frac{1}{4}.
\]
The intermediate value theorem gives a point $c_h$ between $a$ and $b$ such
that $\theta_h(c_h)=\f{1}{2}$. Put $J_h=(c_h-h,c_h+h)$. Then exactly half of
$J_h$, up to a null set, lies in $E$, and hence $u_{J_h}=0$. Consequently,
\[
        \fint_{J_h}|u-u_{J_h}|\ud x
        =
        \fint_{J_h}|u|\ud x
        =1.
\]
Letting $h\downarrow0$ contradicts the $\VMO$ condition. Thus, any
$\{-1,1\}$-valued $\VMO$ map on $(-1,1)$ is a.e. constant.

If $r\equiv1$ a.e., then on $(-1,-1+\delta)$ the integrand
$|r-\sgn x|$ equals $2$, while on $(1-\delta,1)$ it equals $0$. The
normalized two-sided average is, therefore, $1$. The case $r\equiv-1$ is the
same, with the two endpoints interchanged. Hence, the limit displayed cannot 
be zero.
\end{proof}

\begin{proof}[Proof of Theorem \ref{thm:main} in one-dimensional case]
If $n=1$, the above separated-counterexample construction gives
\[
r\in\VMO([-1,1];\{-1,1\}).
\]
Lemma \ref{lem:boundary-geometry} gives
\be
\left|r(x)-\frac{x}{|x|}\right|
\leq
\left(1+\frac2\eta\right)(1-|x|)
\quad\text{for a.e. }x\in (-1,1)\backslash\{0\}.
\label{eq:boundary-final}
\ee
Since $\f{x}{|x|}=\sgn x$ in one dimension, \eqref{eq:boundary-final}
implies
\[
\frac1{2\delta}\int_{(-1,-1+\delta)\cup(1-\delta,1)}
|r(x)-\sgn x|\ud x
\leq
\left(1+\frac2\eta\right)\delta,
\]
which tends to zero. This contradicts Lemma \ref{lem:one-dimensional}.
\end{proof}

\section*{Acknowledgment}
The authors acknowledge the use of ChatGPT, developed by OpenAI, for
assistance with checking intermediate estimates and improving the
exposition. All AI-assisted suggestions were independently verified by the
authors, who take full responsibility for the mathematical content.

\bibliographystyle{plain}

\end{document}